\newcommand{\fr}{\mathfrak}
\newcommand{\op}{\operatorname}
 \newtheorem{lemma} {Lemma} [section]
\newtheorem{theorem}[lemma]{Theorem} 
\newtheorem{remark}[lemma] {Remark} 
\newtheorem{proposition} [lemma]{Proposition}  
\newtheorem{definition}[lemma] {Definition} 
\newtheorem{corollary}[lemma] {Corollary} 
\newtheorem{example}[lemma] {Example}
\begin{document}
\title[A characterization of perfect Leibniz algebras]{A characterization of perfect Leibniz algebras} 

\author{Nikolaos Panagiotis Souris}
\address{University of Patras, Department of Mathematics, University Campus, 26504, Rio Patras, Greece}
\email{nsouris@upatras.gr}

% Keywords and phrases

\maketitle

\begin{abstract}
Leibniz algebras are non-antisymmetric generalizations of Lie algebras that have attracted substantial interest due to their close relation with the latter class. A Leibniz algebra $A$ is called perfect if it coincides with its derived subalgebra $A^2$. As a generalization of an analogous result for Lie algebras, we show that perfect Leibniz algebras, of arbitrary dimension and over any field, are characterized among Leibniz algebras by the property that they are ideals whenever they are embedded as subideals. Equivalently, we prove that perfect Leibniz algebras are precisely those Leibniz algebras such that whenever they are embedded as ideals, they are characteristic ideals, i.e., they are invariant under all derivations of the ambient algebra.  We apply the above to prove certain inclusion relations for derivation algebras of perfect Leibniz algebras.

\medskip 

\noindent  {\it Mathematics Subject Classification 2020.} 17A32; 17A60; 17B40.

\noindent {\it Keywords.} Leibniz algebra; perfect Leibniz algebra; derivation algebra of a Leibniz algebra; characteristic ideal; transitivity of ideals.

\end{abstract}

\section{Introduction}

Leibniz algebras were introduced by Bloh in 1965 (\cite{Bloh}), but made more widely known in the mid 90's through the works of Loday and Pirasvili (\cite{Lod}, \cite{LodPir}).  A \emph{left Leibniz algebra} over a field $\mathbb F$ is a vector space $A$ over $\mathbb F$, endowed with a bilinear product $[ \ , \ ]:A\times A\rightarrow A$ such that for any $x\in A$, the left multiplication $L_x:=[x, \ _- \ ]$ is a derivation, that is

\begin{equation}\label{Jacobi}\big[x,[y,z]\big]=\big[[x,y],z\big]+\big[y,[x,z]\big],  \ \ \makebox{for all} \ \ x,y,z\in A.\end{equation}

\noindent Similarly, \emph{right Leibniz algebras} are defined by requiring that right multiplications are derivations. The \emph{Leibniz kernel} of $A$ is the characteristic (\cite{BoMiSti}) ideal $Leib(A)=\{[x,x]:x\in A\}$, and coincides with the smallest ideal $I$ of $A$ such that $A/I$ is a Lie algebra.  Thus Lie algebras are precisely those Leibniz algebras $A$ with $Leib(A)=\{0\}$.  This simple connection between Lie and Leibniz algebras has stimulated several studies, that analyze the extent to which the rich theory of the former class can be generalized to the latter. For example, both concepts of \emph{nilpotent} and \emph{solvable} Lie algebras extend naturally to Leibniz algebras.  A finite-dimensional Leibniz algebra $A$ is called \emph{semisimple} if the radical $rad(A)$ (the maximal solvable ideal of $A$) coincides with $Leib(A)$ (\cite{GoVi}). Key structural properties, such as Engel's theorem (\cite{Aya}) and the Levi - Malcev theorem in characteristic zero (\cite{Barn}), are also true for Leibniz algebras, yet obstructions may appear due to the lack of anti-symmetry (e.g. the Levi factors are not unique up to conjugation). Much like their antisymmetric counterparts, Leibniz algebras have interactions with mathematical physics (e.g. in gauge theories \cite{BoHo}), while they were recently interpreted in characteristic zero as differential graded Lie algebras (\cite{Most}).

In this paper, we consider the class of \emph{perfect Leibniz algebras}, namely those Leibniz algebras $A$ that coincide with their derived subalgebra $A^2:=[A,A]$.  Perfect Lie algebras form an interesting class, studied among other reasons for their cohomology (\cite{BuWa}, \cite{Pi}) and their interactions with mathematical physics (\cite{Sal}, \cite{Stu}).  However, the class of perfect Leibniz algebras remains rather unexplored. Any semisimple Leibniz algebra in characteristic zero is perfect, but there exist perfect Leibniz algebras which are neither semisimple nor Lie algebras (see Example \ref{secexample}). 

Schenkman, in his Ph.D. thesis (see \cite{Sh}, Corollary 1), proved that if a perfect Lie algebra $\fr{g}$ is a \emph{subideal} of a Lie algebra $\fr{k}$ (i.e., there exist finitely many subalgebras $\fr{k}_0,\dots ,\fr{k}_n$ such that $\fr{g}=\fr{k}_0\unlhd \fr{k}_1 \unlhd\cdots \unlhd \fr{l}_n=\fr{k}$) then $\fr{g}$ is an ideal of $\fr{k}$ (i.e., $\fr{g}\unlhd \fr{k}$).  It then follows that any perfect ideal is a characteristic ideal.  The author proved a converse of this property in \cite{So}, namely if a Lie algebra $\fr{g}$ is an ideal whenever it is embedded as a subideal then $\fr{g}$ is perfect. Schenkman's result is generalized for perfect Leibniz algebras by Misra, Stitzinger, and Yu in \cite{MiSiYu}, Corollary 2.4.  In this paper, we obtain the converse for perfect Leibniz algebras, essentially characterizing perfect Leibniz algebras by this property.  In particular, denoting by $\unlhd$ the relation of being an (two - sided) ideal in Leibniz algebras, we prove the following.

\begin{theorem}\label{th1} Let $A$ be a Leibniz algebra over a field $\mathbb F$. The following are equivalent:\\

\noindent \emph{(i)} The algebra $A$ is perfect.\\
\noindent \emph{(ii)} For all Leibniz algebras $K$ and $M$ such that $A\unlhd K\unlhd M$, it follows that $A\unlhd M$.\\
\noindent \emph{(iii)} For all Leibniz algebras $K$ with $A\unlhd K$, it follows that $A$ is a characteristic ideal of $K$.  \end{theorem}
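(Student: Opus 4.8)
The plan is to organize the argument as a hub around statement (i): I will prove the two forward implications (i)$\Rightarrow$(ii) and (i)$\Rightarrow$(iii) directly, and then establish both converses (ii)$\Rightarrow$(i) and (iii)$\Rightarrow$(i) simultaneously by contraposition, using a single explicit construction. Together these give (i)$\Leftrightarrow$(ii) and (i)$\Leftrightarrow$(iii).

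For (i)$\Rightarrow$(ii), I would simply observe that this is precisely the depth-two instance of the statement that a perfect subideal is an ideal, which is the Leibniz generalization of Schenkman's theorem established in \cite{MiSiYu}, Corollary 2.4; no further work is needed. For (i)$\Rightarrow$(iii) I expect a short self-contained computation. Suppose $A$ is perfect and $A\unlhd K$, and let $D\in\mathrm{Der}(K)$. Since $A=[A,A]$, any $a\in A$ can be written $a=\sum_i[u_i,v_i]$ with $u_i,v_i\in A$, so the derivation rule gives
\[
D(a)=\sum_i\big([Du_i,v_i]+[u_i,Dv_i]\big).
\]
Because $A$ is a \emph{two-sided} ideal of $K$, each $[Du_i,v_i]\in[K,A]\subseteq A$ and each $[u_i,Dv_i]\in[A,K]\subseteq A$, whence $D(a)\in A$. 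Thus $D(A)\subseteq A$ for every $D$, i.e.\ $A$ is characteristic. The decisive point here is that \emph{both} inclusions $[K,A]\subseteq A$ and $[A,K]\subseteq A$ are used, so two-sidedness of the ideal is exactly what makes perfectness suffice.

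For the converses I would argue by contraposition, assuming $A$ is not perfect, i.e.\ $A^2\subsetneq A$. Since $A/A^2\neq\{0\}$, over any field and in any dimension there is a nonzero linear functional $\phi\colon A\to\mathbb F$ with $\phi(A^2)=0$. I then introduce $K=A\oplus\mathbb F z$ with $z$ central ($[z,K]=[K,z]=\{0\}$, the bracket on $A$ unchanged); this is a Leibniz algebra and $A\unlhd K$, since $[K,A]=[A,K]=A^2\subseteq A$. Define $D\colon K\to K$ by $D(a)=\phi(a)z$ for $a\in A$ and $D(z)=0$. Then $D\in\mathrm{Der}(K)$: for $x,y\in A$ one has $D([x,y])=\phi([x,y])z=0$ by $\phi(A^2)=0$, while $[Dx,y]+[x,Dy]=0$ because $\mathrm{Im}\,D\subseteq\mathbb F z$ is central, and the cases involving $z$ are trivial. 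As $\phi\neq 0$, we get $D(A)=\mathbb F z\not\subseteq A$, so $A$ is not characteristic in $K$; this refutes (iii).

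Finally I would promote this to a witness for the failure of (ii) by setting $M=K\oplus\mathbb F t$ with $[t,k]=D(k)$ for $k\in K$, $[k,t]=0$ for all $k\in K$, and $[t,t]=0$, and verifying that $M$ is a Leibniz algebra by checking that each $L_m$ is a derivation. I expect \emph{this verification to be the main obstacle}, because for left Leibniz algebras the naive one-dimensional extension by a derivation (taking $[k,t]=0$) is generally \emph{not} Leibniz: the relevant instance of the identity forces $[D(x),y]=0$ for all $x,y\in K$, which fails for a typical derivation. What rescues the present construction is that $\mathrm{Im}\,D\subseteq\mathbb F z$ is central and $D^2=0$; granting these two facts, every instance of the Leibniz identity involving $t$ collapses either to the derivation property of $D$, to $\phi(A^2)=0$, or to centrality of $z$. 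With $M$ in hand, $[t,K]\subseteq\mathbb F z\subseteq K$ and $[K,t]=\{0\}$ give $K\unlhd M$, so $A\unlhd K\unlhd M$; yet $[t,A]=\mathbb F z\not\subseteq A$ shows $A\not\unlhd M$, refuting (ii). This yields both (ii)$\Rightarrow$(i) and (iii)$\Rightarrow$(i) and completes the equivalences.
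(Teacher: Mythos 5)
Your proposal is correct, but in the two converse directions it takes a genuinely different route from the paper. The forward implications essentially coincide: your (i)$\Rightarrow$(iii) is the same computation the paper writes out, and for (i)$\Rightarrow$(ii) the paper notes, exactly as you do, that \cite{MiSiYu} suffices (it then reproves the statement directly from the identity \eqref{Jacobi} for completeness). For (ii)$\Rightarrow$(i) and (iii)$\Rightarrow$(i), however, the paper does not build a minimal explicit counterexample chain. It instead develops the Lie--holomorph machinery of Section \ref{secdeflie}: the Lie--center $Z_{Lie}(K)$, the ideal $D_{Lie}(K)$ of derivations with image in $Z_{Lie}(K)$, and the semidirect product $hol_{Lie}(K)=K\rtimes D_{Lie}(K)$, which (unlike the holomorph of \cite{BoMiSti}) contains $K$ as an ideal. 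The paper's key criterion (Proposition \ref{mainprop}) --- if every Lie--derivation of every ambient algebra preserves $A$, then $A$ is perfect --- is proved via the algebra $K=A\times(A/[A,A])$ with the canonical derivation $(x,\pi(y))\mapsto(0,\pi(x))$; then (ii)$\Rightarrow$(i) follows from the chain $A\unlhd K\unlhd hol_{Lie}(K)$, and (iii)$\Rightarrow$(i) from $D_{Lie}(K)\subseteq D(K)$. Your construction replaces all of this by hand: $K=A\oplus\mathbb{F}z$ with the rank-one derivation $D=\phi(\cdot)z$ refutes (iii), and the one-dimensional left extension $M=K\oplus\mathbb{F}t$ refutes (ii). This is more economical (two extra dimensions total, no holomorph theory needed), and you correctly isolate the one real danger: a one-sided extension of a left Leibniz algebra by a derivation is \emph{not} Leibniz in general, since the $(x,t,z)$-instance of \eqref{Jacobi} forces $[D(x),z]=0$; your construction survives precisely because $\mathrm{Im}\,D\subseteq\mathbb{F}z$ is central and $D$ kills $[K,K]=A^2$, which makes both sides of that instance vanish. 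What the paper's heavier approach buys is a reusable, choice-free construction: $hol_{Lie}(K)$ embeds an \emph{arbitrary} ambient $K$ as an ideal into an algebra encoding all Lie--derivations at once (the natural Leibniz substitute for the Lie holomorph used in \cite{So}), and the derivation in Proposition \ref{mainprop} is the canonical projection onto $A/[A,A]$ rather than a linear functional $\phi$ chosen via a basis. Your $M$ is, in effect, the minimal one-derivation, left-action-only shadow of that construction.
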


Although we established in \cite{So} a similar characterization for perfect Lie algebras based on the notion of the \emph{holomorph} (the semidirect product of a Lie algebra with its derivation algebra), the proof for Leibniz algebras cannot be obtained by a straightforward generalization.  The main obstruction lies in the fact that there does not exist a notion of a holomorph of Leibniz algebras $A$ that includes $A$ as an ideal (see short discussion in Section \ref{secdeflie}). We bypass this obstruction by introducing the \emph{Lie - holomorph} of a Leibniz algebra (Section \ref{secdeflie}), constructed by the ideal of those derivations that satisfy anti-symmetry conditions.  Theorem \ref{th1} and some simple corollaries are proven in Section \ref{mainsection}. 

We recall that a Lie algebra is called \emph{complete} if is has trivial center and only inner derivations. In \cite{SuZhu}, Su and Zhu proved that the Lie algebra of derivations $D(\fr{g})$ of any perfect Lie algebra $\fr{g}$ with trivial center is complete.  The last fact is equivalent to $D(D(\fr{g}))=D(\fr{g})$, or $D(D(\fr{g}))\subseteq D(\fr{g})$, given that the reverse inclusion always holds. Motivated by this property, in Section \ref{seccomplete}, we apply Theorem \ref{th1} aiming to obtain analogous inclusion relations for derivation algebras of perfect Leibniz algebras (among other results of possible independent interest). More specifically, let $D(A)$ be the (Lie) algebra of derivations of a Leibniz algebra $A$ and let $I$ be the ideal (c.f. Proposition \ref{ideal})
  
  \begin{equation*}\label{I} I:=\{f\in D(A): Im(f)\subseteq Leib(A)\},\end{equation*}
  
  \noindent defined also by Boyle, Misra and Stitzinger (see Theorem 4.3. in \cite{BoMiSti}).  For $n\in \mathbb N$, denote by $D^n(D(A)/I)$ the Lie algebras defined inductively by $D^0(D(A)/I):=D(A)/I$ and $D^{n+1}(D(A)/I):=D\big(D^n(D(A)/I)\big)$. We prove the following.

\begin{theorem}\label{complete} Let $A$ be a perfect Leibniz algebra such that the center of $A/Leib(A)$ is trivial.  Then for all $n\in \mathbb N$, the derivation algebra $D^n(D(A)/I)$ can be embedded as a subalgebra of the derivation algebra $D(A/Leib(A))$.\end{theorem}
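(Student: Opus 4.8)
The plan is to reduce everything to the perfect centerless Lie algebra $\fr{g}:=A/Leib(A)$ and to run an iterated ``restriction to inner derivations'' argument, with Theorem \ref{th1} supplying the crucial characteristic-ideal step. Because $A$ is perfect, $\fr{g}$ is a perfect Lie algebra, and since $Leib(A)$ is a characteristic ideal, every $f\in D(A)$ descends to a derivation $\bar f$ of $\fr g$; the assignment $f\mapsto \bar f$ is a Lie algebra homomorphism $\Phi\colon D(A)\to D(\fr g)$ whose kernel is exactly $I$, so $\Phi$ induces an embedding $D(A)/I\hookrightarrow D(\fr g)$. As $\fr g$ is centerless by hypothesis, $x\mapsto \ad_x$ identifies $\fr g$ with its ideal of inner derivations $\op{Inn}(\fr g)\unlhd D(\fr g)$, which is thus perfect and satisfies $C_{D(\fr g)}(\op{Inn}(\fr g))=0$ (a derivation $\delta$ commuting with every $\ad_x$ obeys $\ad_{\delta(x)}=[\delta,\ad_x]=0$, forcing $\delta(x)\in Z(\fr g)=0$). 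Evaluating $\Phi$ on the left multiplications $L_x\in D(A)$ gives $\Phi(L_x)=\ad_{\bar x}$, so the image $T_0:=\Phi(D(A))\cong D(A)/I$ sits in the ``sandwich'' $\op{Inn}(\fr g)\subseteq T_0\subseteq D(\fr g)$, which handles the case $n=0$.

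The heart of the proof is a reusable lemma: for any subalgebra $T$ with $\op{Inn}(\fr g)\subseteq T\subseteq D(\fr g)$, there is a canonical embedding of $D(T)$ into $D(\fr g)$ whose image again lies between $\op{Inn}(\fr g)$ and $D(\fr g)$. To build it I first invoke Theorem \ref{th1}: since $\op{Inn}(\fr g)$ is an ideal of $D(\fr g)$ contained in $T$, it is a perfect ideal of $T$, hence a characteristic ideal of $T$, so each $\delta\in D(T)$ restricts to a derivation of $\op{Inn}(\fr g)$, yielding a homomorphism $\rho\colon D(T)\to D(\op{Inn}(\fr g))\cong D(\fr g)$. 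This $\rho$ is injective: if $\delta|_{\op{Inn}(\fr g)}=0$, then for $t\in T$ and $a\in \op{Inn}(\fr g)$ the derivation rule gives $[\delta(t),a]=\delta([t,a])-[t,\delta(a)]=0$, whence $\delta(t)\in C_{D(\fr g)}(\op{Inn}(\fr g))=0$ and $\delta=0$. Finally, restricting the inner derivations $\ad_a^{T}$ of $T$ with $a\in\op{Inn}(\fr g)$ recovers, under the identification $D(\op{Inn}(\fr g))\cong D(\fr g)$ induced by $\fr g\cong\op{Inn}(\fr g)$, precisely $\op{Inn}(\fr g)$; thus $\op{Inn}(\fr g)\subseteq \rho(D(T))\subseteq D(\fr g)$ is again sandwiched.

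With the lemma in hand I would conclude by induction. Setting $T_{n+1}:=\rho(D(T_n))$, the lemma yields $T_{n+1}\cong D(T_n)\cong D^{n+1}(D(A)/I)$ together with $\op{Inn}(\fr g)\subseteq T_{n+1}\subseteq D(\fr g)$, so by induction each $D^n(D(A)/I)$ embeds as a subalgebra of $D(\fr g)=D(A/Leib(A))$, as claimed.

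I expect the main obstacle to be the reusable lemma rather than the induction. The delicate point is that the restriction map $\rho$ is well defined at all, which requires $\op{Inn}(\fr g)$ to be invariant under \emph{every} derivation of $T$; this is exactly where the characteristic-ideal conclusion of Theorem \ref{th1} is indispensable and where a naive approach would fail. A secondary technical care is needed in tracking the identification $D(\op{Inn}(\fr g))\cong D(\fr g)$ precisely enough to verify that $\rho(D(T))$ still contains $\op{Inn}(\fr g)$, since this sandwiching is what lets the construction be iterated indefinitely.
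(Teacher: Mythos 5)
Your proof is correct, and it takes a genuinely different route from the paper's. The paper sets $\fr g:=D(A)/I$ and builds the subideal tower $L(A)\unlhd \fr g\unlhd D^1(\fr g)\unlhd D^2(\fr g)\unlhd\cdots$ via faithful adjoint embeddings (which requires the triviality of the successive centers, Lemma \ref{lemma11}); Theorem \ref{th1} then makes the perfect Lie algebra $L(A)=A/Leib(A)$ a characteristic ideal of each $D^{n-1}(\fr g)$, and Schenkman's lemmas on centralizers in subinvariant series (\cite{Sh}, Lemmas 10 and 12) give $C_{D^{n-1}(\fr g)}(L(A))=\{0\}$, so that Proposition \ref{lemma10} embeds each $D^n(\fr g)$ into $D(L(A))$. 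You instead fix the ambient Lie algebra $D(A/Leib(A))$ once and for all and prove a self-propagating ``sandwich'' lemma: if $\op{Inn}(A/Leib(A))\subseteq T\subseteq D(A/Leib(A))$, then restriction to $\op{Inn}(A/Leib(A))$ embeds $D(T)$ back into $D(A/Leib(A))$ with image again sandwiched, so the induction never leaves $D(A/Leib(A))$. (Your $n=0$ step --- descent of derivations modulo $Leib(A)$, with kernel exactly $I$ --- is essentially the same map as the paper's.) Your route is leaner in two respects: the ideal property of $\op{Inn}(A/Leib(A))$ in $T$ and the vanishing of its centralizer in $T$ are both immediate from the sandwich, so no propagation of centralizer conditions along a chain and no faithful adjoint embeddings are needed, and Theorem \ref{th1} is invoked only in the form ``perfect ideal $\Rightarrow$ characteristic ideal'' (its transitivity statement (ii) is never used). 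It also yields the slightly sharper conclusion that every embedded copy of $D^n(D(A)/I)$ contains the inner derivations of $A/Leib(A)$. What the paper's tower buys is the explicit subideal structure over $L(A)$, which ties the theorem to the transitivity theme of the paper and reuses Proposition \ref{lemma10} verbatim as a black box. One cosmetic caution: your $\fr g$ denotes $A/Leib(A)$, whereas the paper's $\fr g$ is $D(A)/I$, so a reader comparing the two arguments should beware of the clash.
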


We remark that the result of Su and Zhu can be obtained as an immediate corollary of Theorem \ref{complete} (see Section \ref{seccomplete}). Throughout this paper, we will mostly follow the notation in \cite{BoMiSti}. Unless otherwise stated, all Leibniz ideals are assumed to be two-sided, and we consider only left Leibniz algebras without loss of generality.

\section{The Lie - holomorph of a Leibniz algebra}\label{secdeflie}

 The \emph{holomorph of a Lie algebra} $\frak{g}$ is defined as the semidirect product $Hol(\fr{g}):=\frak{g}\rtimes D(\frak{g})$, equipped with the Lie bracket $\big[(x,f),(y,g)\big]:=\big([x,y]+f(y)-g(x),[f,g]\big)$, for $x,y\in \fr{g}$ and $f,g\in D(\fr{g})$.  An important aspect that we took advantage of in \cite{So} is that $\fr{g}$ can be embedded as an ideal into its holomorph.  Unfortunately, this holomorph construction does not extend directly to Leibniz algebras, as it may not yield a Leibniz algebra.  Instead, the authors in \cite{BoMiSti} define the \emph{holomorph of a Leibniz algebra} $A$ to be the vector space $hol(A):=A\oplus D(A)$, equipped with the product

\begin{equation*} [x+f,y+g]:=[x,y]+f(y)+[L_x,g]+[f,g],\end{equation*}

\noindent for $x,y\in A$ and $f,g\in D(A)$.  This new notion of a holomorph allows the generalization of several results from complete Lie algebras to complete Leibniz algebras (\cite{BoMiSti}).  However, a downside of the above construction for our purpose is that $A$ cannot be embedded as an ideal into $hol(A)$.  To bypass this obstruction, we consider the \emph{Lie - center} of $A$ (we adopt the terminology and notation in \cite{Cas}), namely the ideal

\[ Z_{Lie}(A):=\{x\in A:[x,y]+[y,x]=0 \ \ \makebox{for all} \ \ y\in A\}.  \]

\noindent Accordingly, we define the space of \emph{Lie - derivations} of $A$ by

\begin{eqnarray*} D_{Lie}(A)&:=&\{f\in D(A):Im(f)\subseteq Z_{Lie}(A)\}.\end{eqnarray*}

\noindent It follows immediately that if $A$ is a Lie algebra, then $D_{Lie}(A)$ coincides with $D(A)$. Moreover, we have the following result.

\begin{proposition}\label{liecentr}Let $A$ be a Leibniz algebra over a field $\mathbb F$. Then:\\

\noindent \emph{(i)} The Lie - center $Z_{Lie}(A)$ is a characteristic ideal of $A$.\\
\noindent \emph{(ii)} The space $D_{Lie}(A)$ of Lie - derivations is an ideal of $D(A)$.\end{proposition}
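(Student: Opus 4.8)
The plan is to prove part (i) by first establishing the stronger \emph{characteristic} property---invariance of $Z_{Lie}(A)$ under every derivation---and then deducing that $Z_{Lie}(A)$ is a two-sided ideal; part (ii) will then follow almost formally from (i).

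For the characteristic property, I fix $z\in Z_{Lie}(A)$, a derivation $f\in D(A)$, and an arbitrary $y\in A$, and aim to show $[f(z),y]+[y,f(z)]=0$. The idea is to move $f$ across the brackets using the derivation rule $f([u,v])=[f(u),v]+[u,f(v)]$. Applying this to $[z,y]$ and to $[y,z]$, solving for $[f(z),y]$ and $[y,f(z)]$ respectively, and then adding, I expect to obtain
\[ [f(z),y]+[y,f(z)]=f\big([z,y]+[y,z]\big)-\big([z,f(y)]+[f(y),z]\big). \]
Both terms on the right vanish because $z\in Z_{Lie}(A)$: the first since $[z,y]+[y,z]=0$ (so its image under $f$ is $0$), and the second by applying the defining condition of $Z_{Lie}(A)$ with $f(y)$ in place of $y$. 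Hence $f(z)\in Z_{Lie}(A)$, so $Z_{Lie}(A)$ is characteristic.

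From the characteristic property the ideal property follows in two steps. Since $A$ is a left Leibniz algebra, each left multiplication $L_x$ is a derivation by \eqref{Jacobi}, so invariance under all derivations gives $[x,Z_{Lie}(A)]=L_x(Z_{Lie}(A))\subseteq Z_{Lie}(A)$, i.e. $Z_{Lie}(A)$ is a left ideal. For the right side I would exploit the antisymmetry built into the definition: for $z\in Z_{Lie}(A)$ and $x\in A$ one has $[z,x]=-[x,z]$, and since $[x,z]=L_x(z)\in Z_{Lie}(A)$ and $Z_{Lie}(A)$ is a subspace, $[z,x]\in Z_{Lie}(A)$ as well. (Alternatively, closure under the bracket can be verified directly from \eqref{Jacobi}, but routing through the characteristic property is shorter.) This yields the two-sided ideal property.

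Finally, for part (ii) I take $f\in D(A)$ and $g\in D_{Lie}(A)$ and examine $[f,g]=f\circ g-g\circ f$. It is standard that the commutator of two derivations is again a derivation, so $[f,g]\in D(A)$, and $D_{Lie}(A)$ is clearly a subspace. It remains to check $Im([f,g])\subseteq Z_{Lie}(A)$: for $x\in A$, the term $g(f(x))$ lies in $Z_{Lie}(A)$ because $Im(g)\subseteq Z_{Lie}(A)$, while $f(g(x))$ lies in $Z_{Lie}(A)$ because $g(x)\in Z_{Lie}(A)$ and $Z_{Lie}(A)$ is invariant under $f$ by part (i). Their difference therefore lies in $Z_{Lie}(A)$, giving $[f,g]\in D_{Lie}(A)$. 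I expect the only genuinely delicate point to be the characteristic computation in part (i)---in particular keeping careful track of the non-antisymmetric brackets so that the two applications of the derivation rule combine with the correct signs---whereas the ideal property and all of part (ii) should be formal once (i) is in hand.
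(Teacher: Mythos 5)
Your proof is correct, and the key computation---expanding $[f(z),y]+[y,f(z)]$ via the derivation rule and killing both resulting terms with the defining condition of $Z_{Lie}(A)$---is exactly the computation in the paper's proof of part (i). Where you genuinely diverge is in the logical organization around it. First, for the ideal property of $Z_{Lie}(A)$ the paper simply cites \cite{Cas}, whereas you derive it from the characteristic property (left ideal because $L_x$ is a derivation in a left Leibniz algebra, right ideal because $[z,x]=-[x,z]$ for $z\in Z_{Lie}(A)$); this makes the proposition self-contained, at the small cost of having to note that $Z_{Lie}(A)$ is a subspace. Second, for part (ii) the paper runs an independent direct verification: it expands $\big[[f,g](x),y\big]+\big[y,[f,g](x)\big]$ and handles the term $g(f(x))$ by re-applying the derivation rule to $g\big([f(x),y]+[y,f(x)]\big)$---in effect redoing the part (i) computation inline---while you instead observe that both $f(g(x))$ and $g(f(x))$ already lie in $Z_{Lie}(A)$ (one because $Im(g)\subseteq Z_{Lie}(A)$, the other by invariance of $Z_{Lie}(A)$ under $f$ from part (i)), so $[f,g](x)$ lies in the subspace $Z_{Lie}(A)$ with no further computation. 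Your route is more economical and exposes part (ii) as a purely formal consequence of the characteristic property; the paper's route keeps the two parts logically independent but duplicates the underlying calculation.
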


\begin{proof}  For (i), the fact that $Z_{Lie}(A)$ is an ideal of $A$ has been proven in \cite{Cas}.  To show that it is a characteristic ideal, let $f\in D(A)$ and $x\in Z_{Lie}(A)$.  Since $f$ is a derivation and using the definition of $Z_{Lie}(A)$, for all $y\in A$ we have

\begin{equation*}[f(x),y]+[y,f(x)]=f\big([x,y]+[y,x]\big)-\big([x,f(y)]+[f(y),x]\big)=0,\end{equation*}

\noindent  Therefore, $f(x)\in Z_{Lie}(A)$.  For part (ii), let $f\in D_{Lie}(A)$, $g\in D(A)$ and $x,y\in A$. We have 

\begin{eqnarray*} \big[[f,g](x),y\big]+\big[y,[f,g](x)\big]&=&\big([f(g(x)),y]+[y,f(g(x))]\big)-[g(f(x)),y]-[y,g(f(x))]\\
&=&-[g(f(x)),y]-[y,g(f(x))]\\
&=&-g\big([f(x),y]+[y,f(x)]\big)+\big([f(x),g(y)]+[g(y),f(x)]\big)=0, \end{eqnarray*}

\noindent where the second equality holds because $f(g(x))\in Z_{Lie}(A)$, the third equality holds because $g$ is a derivation, and the final equality holds because $f(x)\in Z_{Lie}(A)$.  We conclude that $[f,g]\in D_{Lie}(A)$ and thus $D_{Lie}(A)$ is an ideal of $D(A)$.\end{proof}

 In turn, we use the Lie - derivations to define the Lie - holomorph of $A$.  

\begin{definition}\label{deflie}Let $A$ be a Leibniz algebra over a field $\mathbb F$. The Lie - holomorph of $A$ is the semidirect product $hol_{Lie}(A):=A\rtimes D_{Lie}(A)$, equipped with the bilinear product $\big[(x,f),(y,g)]:=\big([x,y]+f(y)-g(x),[f,g]\big)$ for $x,y\in A$ and $f,g\in D_{Lie}(A)$.\end{definition}

It turns out that the Lie - holomorph of $A$ is a Leibniz algebra that contains $A$ as an ideal.  

\begin{proposition}\label{prop1}Let $A$ be a Leibniz algebra over a field $\mathbb F$.  Then the following are true:\\

\noindent \emph{(i)} The Lie - holomorph $hol_{Lie}(A)$ is a Leibniz algebra. \\
\noindent \emph{(ii)} The algebra $A$ can be embedded as an ideal into $hol_{Lie}(A)$ by $A\rightarrow A\times \{0\}$.\end{proposition}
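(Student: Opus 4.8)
The plan is to verify Proposition \ref{prop1} in two parts, checking directly that the proposed bilinear product satisfies the left Leibniz identity \eqref{Jacobi}, and then that $A\times\{0\}$ is an ideal. Throughout, the essential facts I would rely on are that $D_{Lie}(A)$ is an ideal of $D(A)$ (Proposition \ref{liecentr}(ii)), that each $f\in D(A)$ is a derivation, and, crucially, that the image of every Lie-derivation lands in $Z_{Lie}(A)$, so for any $f\in D_{Lie}(A)$ and any $z\in A$ one has the anti-symmetry relation $[f(z),w]+[w,f(z)]=0$ for all $w\in A$.

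\textbf{Part (i).} For the Leibniz identity, I would take three generic elements $(x,f),(y,g),(z,h)$ of $hol_{Lie}(A)$ and expand both sides of
\[
\big[(x,f),[(y,g),(z,h)]\big]=\big[[(x,f),(y,g)],(z,h)\big]+\big[(y,g),[(x,f),(z,h)]\big].
\]
The second (derivation-algebra) component reduces to the Jacobi identity for the commutator bracket $[f,[g,h]]=[[f,g],h]+[g,[f,h]]$ on the Lie algebra $D_{Lie}(A)$, which holds automatically. The work is in the first ($A$-valued) component. After expanding, the purely bracket terms $\big[x,[y,z]\big]$ versus $\big[[x,y],z\big]+\big[y,[x,z]\big]$ cancel by the Leibniz identity in $A$ itself. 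The terms linear in a single derivation, e.g. those of the form $f([y,z])$ versus $[f(y),z]+[y,f(z)]$, cancel because each $f,g,h$ is a derivation of $A$. This leaves the genuinely new terms: those quadratic in the derivations and those mixing a derivation acting on another derivation's output, such as $f(g(z))$, $g(f(z))$, $[f,g](z)$, and cross terms like $[g(x),z]$ or $[f(y),z]$ that arise from the $-g(x)$ and $f(y)$ pieces of the product. Here is where the Lie-derivation hypothesis enters: terms like $[g(x),z]+[z,g(x)]$ vanish because $g(x)\in Z_{Lie}(A)$, allowing the surviving cross terms to be replaced by their negatives and recombined. I would organize these remaining terms and show they cancel in groups, using the anti-symmetry relation to convert each offending bracket into the form needed to pair with another.

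\textbf{Part (ii).} To see that $A\times\{0\}$ is an ideal, I would compute the bracket of a general element $(y,g)$ with a general element $(x,0)$ of $A\times\{0\}$ on both sides. Using the product in Definition \ref{deflie},
\[
\big[(y,g),(x,0)\big]=\big([y,x]+g(x),0\big)\quad\text{and}\quad\big[(x,0),(y,g)\big]=\big([x,y]-g(x),0\big),
\]
and both results lie in $A\times\{0\}$ since $g(x)\in A$. This confirms $A\times\{0\}$ is a two-sided ideal, and the map $x\mapsto(x,0)$ is evidently an injective homomorphism onto it, giving the embedding. I would note that identifying $A$ with $A\times\{0\}$ makes each $g\in D_{Lie}(A)$ act on $A$ precisely as the derivation $g$, which is consistent with the semidirect-product structure.

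The main obstacle is the bookkeeping in Part (i): the first component produces a sizable collection of terms, and the cancellations of the derivation-mixing and cross terms are not each individually zero but must be grouped so that the anti-symmetry of elements of $Z_{Lie}(A)$ can be applied. In particular, the asymmetry introduced by the $f(y)-g(x)$ convention (rather than a symmetric one) means one must track signs carefully, and the reason the Lie-holomorph works with $D_{Lie}(A)$ but fails for the full $D(A)$ is exactly that these residual cross terms only cancel when the relevant derivation images are Lie-central. I expect the verification to go through cleanly once the terms are sorted into the three classes above, but writing it so the role of $Z_{Lie}(A)$ is transparent is the delicate point.
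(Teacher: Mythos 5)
Your proposal is correct and is exactly the approach the paper takes: the paper omits the proof as ``straightforward computations,'' and your outline supplies precisely those computations, with part (ii) fully verified. For part (i), after the cancellations you describe (Jacobi in the second component, the Leibniz identity of $A$, and the derivation property of $f,g,h$), the single surviving term in the first component of $\big[(x,f),[(y,g),(z,h)]\big]-\big[[(x,f),(y,g)],(z,h)\big]-\big[(y,g),[(x,f),(z,h)]\big]$ is $[h(x),y]+[y,h(x)]$, which vanishes exactly because $h\in D_{Lie}(A)$ has image in $Z_{Lie}(A)$ --- confirming your identification of where, and why, the Lie-derivation hypothesis is indispensable.
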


The proof of Proposition \ref{prop1} can be verified by straightforward computations and is omitted. Note that if $A$ is a Lie algebra then the Lie - holomorph of $A$ coincides with its holomorph as a Lie algebra, i.e., $hol_{Lie}(A)=Hol(A)$.

\section{A characterization of perfect Leibniz algebras}\label{mainsection}

We begin this section with a general construction of perfect Leibniz algebras that are neither semisimple nor Lie algebras.

\begin{example}\label{secexample}
We consider the following construction (see for example \cite{KiWe}): Let $(\fr{g}, [ \ , \ ]_{\fr{g}})$ be a Lie algebra over a field of characteristic zero, and let $V$ be a $\fr{g}$ - module with a left action $\fr{g}\times V\rightarrow V$, $(x,v)\mapsto x\cdot v$. Define a binary product $[ \ , \ ]$ on $\fr{g}\times V$ by $\big[(x,u),(y,v)\big]:=\big([x,y]_{\fr{g}},x\cdot v\big)$.  Then $\big(\fr{g}\times V, [ \ , \ ]\big)$ is a Leibniz algebra, called the hemisemidirect product of $\fr{g}$ and $V$, and denoted by $\fr{g}\ltimes_HV$.  If $\fr{g}$ acts non - trivially on $V$ then $\fr{g}\ltimes_HV$ is not a Lie algebra (see e.g. Lemma 1.1 (a) in \cite{Fel}). Moreover, by Lemma 1.1 (b) in \cite{Fel}, the product $\fr{g}\ltimes_HV$ is a perfect Leibniz algebra if and only if $\fr{g}$ is a perfect Lie algebra and $\fr{g}\cdot V=V$.

Now assume that $\fr{g}$ is a finite-dimensional non-semisimple perfect Lie algebra (see e.g. the semi-direct products in \cite{BuWa}). Then $\fr{g}$ admits a finite-dimensional, non-trivial and irreducible module $V$  (To see this, consider the Levi-Malcev decomposition $\fr{g}=\fr{s}+\fr{r}$, where $\fr{s}$ is semisimple and $\fr{r}$ is the radical of $\fr{g}$, and observe that since $\fr{g}$ is perfect then $\fr{s}$ is non-zero and $\fr{r}$ is a non-trivial $\fr{s}$-module. Then choose an irreducible non-trivial $\fr{s}$-module $V\subseteq \fr{r}$ and observe that $V$ can be made into an irreducible non-trivial $\fr{g}$-module by defining the action of $\fr{r}$ on $V$ to be trivial).  Therefore $\fr{g}\ltimes_HV$ is a perfect Leibniz algebra that is not a Lie algebra. Finally, since $\fr{g}$ is not semisimple, Theorem 2.1. in \cite{Fel} implies that $\fr{g}\ltimes_HV$ is not a semisimple Leibniz algebra.

\end{example}

We proceed with the main aim of this section, which is to prove Theorem \ref{th1}.  We also give some simple corollaries. Our main argument is based on the following criterion that implements the Lie - holomorph.

\begin{proposition}\label{mainprop}Let $A$ be a Leibniz algebra over a field $\mathbb F$.  Assume that for all Leibniz algebras $K$ such that $A\unlhd K$, and for all $f\in D_{Lie}(K)$, we have $f(A)\subseteq A$.  Then $A$ is perfect.\end{proposition}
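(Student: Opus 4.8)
The plan is to argue by contraposition. Suppose that $A$ is not perfect, so that $A^2\subsetneq A$; I will produce a Leibniz algebra $K$ with $A\unlhd K$ and a Lie-derivation $f\in D_{Lie}(K)$ with $f(A)\not\subseteq A$, contradicting the hypothesis. The guiding idea is that the failure of perfectness is recorded exactly by the nonzero quotient $A/A^2$, and such a quotient can always be realized as the image of a derivation mapping into a \emph{central} direction of a suitable one-dimensional extension of $A$, so that the derivation identity is forced to collapse to a manageable condition.

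Concretely, I would first take $K:=A\oplus \mathbb{F}z$, the direct product of $A$ with the one-dimensional Leibniz algebra with zero bracket; that is, declare $z$ to be a central element, $[z,k]=[k,z]=0$ for all $k\in K$, and leave the bracket of $A$ unchanged. Then $K$ is a Leibniz algebra, the copy $A\cong A\oplus\{0\}$ is an ideal of $K$, and $z$ lies in the Lie-center $Z_{Lie}(K)$ trivially, since it is central. A short computation shows $K^2=[A,A]=A^2$, because every bracket in $K$ only sees the $A$-components of its arguments.

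Next, using $A^2\subsetneq A$, I fix a vector-space complement $V$ with $A=A^2\oplus V$ and $V\neq\{0\}$, and choose any nonzero linear map $\phi\colon V\to \mathbb{F}z$. I define $f\colon K\to K$ to vanish on $A^2\oplus \mathbb{F}z$ and to equal $\phi$ on $V$, so that $Im(f)\subseteq \mathbb{F}z\subseteq Z_{Lie}(K)$. The key verification is that $f$ is a derivation of $K$: for all $x,y\in K$ the term $[f(x),y]+[x,f(y)]$ vanishes because $f$ takes values in the central line $\mathbb{F}z$, while $f([x,y])$ vanishes because $[x,y]\in K^2=A^2$ and $f$ is zero on $A^2$. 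Hence $f\in D_{Lie}(K)$, yet $f(V)=\phi(V)\neq\{0\}$ lies in $\mathbb{F}z$, which meets $A$ only in $\{0\}$, so $f(A)\not\subseteq A$. This contradicts the assumption on $A$, and therefore $A$ is perfect.

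The construction is elementary and works over any field and in arbitrary dimension, since the complement $V$ exists by extending a basis. The only delicate point, which is really the heart of the argument, is the balance in the derivation identity: forcing $Im(f)$ to be central kills the product terms, so the derivation condition reduces to the single requirement that $f$ annihilate $K^2=A^2$, and this requirement is compatible with $f$ being nonzero precisely when $A$ fails to be perfect. I would also note that the same conclusion can be phrased inside the Lie-holomorph framework of Section \ref{secdeflie} via Proposition \ref{prop1}, but the one-dimensional central extension above already exhibits the required $K$ while keeping every verification self-contained.
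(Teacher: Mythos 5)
Your proof is correct and follows essentially the same strategy as the paper: adjoin an abelian, central direct factor to $A$ and define a Lie-derivation with image in that factor which vanishes on $[A,A]$ — possible precisely because $A$ is not perfect — and the key verification (centrality of the image kills the product terms, vanishing on $K^2=A^2$ kills the remaining term) is identical in both arguments. The only difference is cosmetic: the paper takes the full quotient $A/[A,A]$ as the added factor with the canonical projection as the derivation, which makes the construction canonical, whereas you take a one-dimensional factor $\mathbb{F}z$ and define $f$ via a chosen vector-space complement of $A^2$ in $A$ (harmless, though it invokes basis extension in infinite dimension where the paper needs no choice at all).
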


\begin{proof} Assume on the contrary that $A$ is not perfect.  Then the algebra $A/[A,A]$ is abelian and not trivial.  We will construct a suitable Leibniz algebra $K$ containing $A$ as an ideal, and such that there exists a derivation $f\in D_{Lie}(K)$ that does not leave $A$ invariant.  To this end, let $\pi:A\rightarrow A/[A,A]$ be the canonical projection and consider the direct product $K:=A\times (A/[A,A])$, equipped with the bilinear product 

\begin{equation*} \big[(x,\pi(y)), (z,\pi(w))]:=\big([x,z],[\pi(y),\pi(w)]\big)=\big([x,z],0)\big), \end{equation*}
\noindent  for $x,z\in A$ and $\pi(y),\pi(w)\in A/[A,A]$, $y,w\in A$.  It is clear that $(K, [ \ , \ ])$ is a Leibniz algebra and $A=A\times \{0\}$ is an ideal of $K$. Define a linear endomorphism $f:K\rightarrow K$ by 

\[ f\big((x,\pi(y))\big):=(0,\pi(x))\in \{0\}\times (A/[A,A]),  \ \ x,y\in A.\]

\noindent Then for all $x,y,z,w\in A$ we have on the one hand that 

\begin{equation}\label{eq1} f\big(\big[(x,\pi(y)), (z,\pi(w))\big]\big)=f\big([x,z],0\big)=\big(0,\pi([x,z])\big)\in \{0\}\times \pi([A,A])=\{0\}. \end{equation}

\noindent On the other hand,  using the definition of $f$, it can be easily verified that

\begin{equation}\label{eq2} \big[f((x,\pi(y))),(z,\pi(w))\big]+\big[(x,\pi(y)),f((z,\pi(w)))\big]=0.\end{equation}

\noindent Equations \eqref{eq1} and \eqref{eq2} imply that $f\in D(K)$.  Moreover, 

\begin{equation*}\big[f((x,\pi(y))),(z,\pi(w))\big]+\big[(z,\pi(w)),f((x,\pi(y)))\big]=0,\end{equation*}

\noindent for all $x,y,z,w\in A$, and thus $f\in D_{Lie}(K)$. Since $A/[A,A]$ is non-trivial, we can choose a $x\in A$ such that $\pi(x)\neq 0$, and thus $0\neq f((x,0))=(0,\pi(x))\in \{0\}\times (A/[A,A])$.  Therefore, $f(A\times \{0\})\not\subseteq A\times \{0\}$, which is a contradiction.  We conclude that $A/[A,A]$ is trivial, i.e., $A$ is perfect.\end{proof} 

We proceed to prove Theorem \ref{th1}.\\

\noindent \emph{Proof of Theorem \ref{th1}.}  We will prove that statement (i) is equivalent to both statements (ii) and (iii). Firstly, we prove that (i) implies (ii). This can be deduced from the results in \cite{MiSiYu}, but we prove it here for completeness using simply the identity \eqref{Jacobi}.  Assume that $A$ is perfect and $A\unlhd K\unlhd M$.  For all $x_i,y_i\in A$ and $z\in M$, we have 

\begin{equation*}\big[[x_i,y_i],z\big]=\big[x_i,[y_i,z]\big]-\big[y_i,[x_i,z]\big]\in \big[A,[A,M]\big]\subseteq [A,K]\subseteq A.\end{equation*}
 
\noindent Similarly,

\begin{equation*}\big[z,[x_i,y_i]\big]=\big[[z,x_i],y_i\big]+\big[x_i,[z,y_i]\big]\in \big[[M,A],A\big]+\big[A,[M,A]\big]\subseteq [K,A]+[A,K]\subseteq A.\end{equation*}

\noindent Since any element of $A=[A,A]$ can be written as a finite sum of elements of the form $[x_i,y_i]$, $x_i,y_i\in A$, the above equations imply that $A$ is both a left and right ideal of $M$, and hence an ideal of $M$. This proves that (i) implies (ii).  To prove that (ii) implies (i), we will use Proposition \ref{mainprop}. Let $K$ be a Leibniz algebra such that $A\unlhd K$. By part (ii) of Proposition \ref{prop1}, we have a subideal sequence  $A\times \{0\}\unlhd K\unlhd hol_{Lie}(K)$. Let $x\in A$ and $f\in D_{Lie}(K)$.  The product in $hol_{Lie}(K)$ from Definition \ref{deflie} yields $\big[(x,0),(0,f)\big]=(-f(x),0)$. On the other hand, our assumption and the subideal sequence  $A\times \{0\}\unlhd K\unlhd hol_{Lie}(K)$ imply that $A\times \{0\}$ is an ideal of $hol_{Lie}(K)$, and thus $(-f(x),0)\in A\times \{0\}$, i.e., $f(x)\in A$ for all $x\in A$ and $f\in D_{Lie}(K)$.  Since the algebra $K$ is arbitrary, Proposition \ref{mainprop} implies that $A$ is perfect.

Finally, we prove the equivalence of (i) and (iii). Assuming that (i) holds, suppose that $A\unlhd K$ for some Leibniz algebra $K$.  Then for all $x_i,y_i\in A$ and $f\in D(K)$, we have $f\big([x_i,y_i]\big)=[f(x_i),y_i]+[x_i,f(y_i)]\in [A,A]=A$.  Since any element of $A$ is written as a finite sum of elements of the form $[x_i,y_i]$, $x_i,y_i\in A$, the last equation implies that $f(A)\subseteq A$, and hence $A$ is a characteristic ideal of $K$.  Conversely, if (iii) holds then for all Leibniz algebras $K$ such that $A\unlhd K$ and for all $f\in D_{Lie}(K)\subseteq D(K)$, it follows that $f(A)\subseteq A$.  Proposition \ref{mainprop} then implies that $A$ is perfect.\qed \\

\noindent  By a simple inductive argument, Theorem \ref{th1} implies the following.

\begin{corollary}\label{corol1} Let $A$ be a Leibniz algebra over a field $\mathbb F$. The following are equivalent:\\

\noindent \emph{(i)} The algebra $A$ is perfect.\\
\noindent \emph{(ii)} For all Leibniz algebras $K$, the algebra $A$ is a subideal of $K$ if and only if it is an ideal of $K$ (i.e., $A\unlhd K$).\end{corollary}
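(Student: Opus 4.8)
The plan is to derive Corollary \ref{corol1} as a direct consequence of Theorem \ref{th1}, using only an elementary inductive argument on the length of a subideal chain. The key observation is that the statement (ii) of Corollary \ref{corol1} unpacks into two implications: the ``only if'' direction, which says that every subideal $A$ of $K$ is actually an ideal of $K$, and the ``if'' direction, which is trivial since any ideal is in particular a subideal (a subideal chain of length one). Thus the real content is the ``only if'' direction, and I would note at the outset that the ``if'' direction requires no hypothesis at all.

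First I would establish that (i) implies (ii). Assuming $A$ is perfect, suppose $A$ is a subideal of some Leibniz algebra $K$, so that there exist subalgebras with $A = K_0 \unlhd K_1 \unlhd \cdots \unlhd K_n = K$. I would induct on $n$. The base case $n \leq 1$ is immediate, since then $A$ is already an ideal of $K$. For the inductive step, I would apply the implication (i) $\Rightarrow$ (ii) of Theorem \ref{th1} to the three-term chain $A \unlhd K_1 \unlhd K_2$, concluding $A \unlhd K_2$. This collapses the first two steps of the chain, yielding the shorter subideal chain $A \unlhd K_2 \unlhd \cdots \unlhd K_n = K$ of length $n-1$, to which the inductive hypothesis applies, giving $A \unlhd K$. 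The converse direction, (ii) implies (i), is even simpler: statement (ii) of Corollary \ref{corol1} in particular contains statement (ii) of Theorem \ref{th1} (a three-term chain $A \unlhd K \unlhd M$ exhibits $A$ as a subideal of $M$, which (ii) forces to be an ideal), and so (i) follows from Theorem \ref{th1} directly.

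I do not anticipate any genuine obstacle here, as the argument is purely formal once Theorem \ref{th1} is in hand; the only point demanding a little care is bookkeeping the induction correctly, namely ensuring that the relation $\unlhd$ produced by Theorem \ref{th1} is genuinely the two-sided ideal relation so that the shortened chain remains a valid subideal chain of Leibniz algebras. Since Theorem \ref{th1}(ii) outputs precisely $A \unlhd M$ in the same two-sided sense, the reduction is legitimate and the induction proceeds without complication.
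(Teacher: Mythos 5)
Your proof is correct and matches the paper's intent exactly: the paper states Corollary \ref{corol1} follows from Theorem \ref{th1} ``by a simple inductive argument,'' and your induction on the length of the subideal chain (collapsing the first two steps via Theorem \ref{th1}(i)$\Rightarrow$(ii)), together with the observation that Corollary \ref{corol1}(ii) subsumes Theorem \ref{th1}(ii) for the converse, is precisely that argument spelled out.
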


\noindent To conclude this section, recall that if $A$ is a Leibniz subalgebra of a Leibniz algebra $K$, the \emph{normalizer} $N_K(A)$ is the largest Leibniz subalgebra of $K$ containing $A$ as an ideal.  An immediate consequence of Theorem \ref{th1} is that the normalizer tower $A\unlhd N_K(A)\unlhd N_{K}(N_K(A))\unlhd \cdots $, of any perfect Leibniz subalgebra of a Leibniz algebra $K$, terminates at the second step, i.e., the algebra $N_K(A)$ is self - normalizing.

\begin{corollary}\label{norm}Let $A$ be a perfect Leibniz subalgebra of a Leibniz algebra $K$.  Then the normalizer $N_K(A)$ of $A$ in $K$ is self - normalizing, that is $N_K(N_K(A))=N_K(A)$.\end{corollary}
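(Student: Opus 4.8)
The plan is to write $N := N_K(A)$ and to establish the two inclusions $N \subseteq N_K(N)$ and $N_K(N) \subseteq N$ separately. The first is formal, while the second is exactly where Theorem \ref{th1} does the work. I would begin by recording the two structural facts I need: by the defining property of the normalizer, $A \unlhd N$; and, again by definition, for any Leibniz subalgebra every algebra is an ideal of its own normalizer, so $N \unlhd N_K(N)$ and in particular $N \subseteq N_K(N)$.

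The substance lies in proving the reverse inclusion $N_K(N) \subseteq N$. Here I would set $M := N_K(N)$, so that $N \unlhd M$ holds by the definition of $M$. Chaining this with $A \unlhd N$ produces the subideal sequence $A \unlhd N \unlhd M$. Since $A$ is perfect by hypothesis, the implication \emph{(i)} $\Rightarrow$ \emph{(ii)} of Theorem \ref{th1} applies to this sequence and yields $A \unlhd M$.

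Finally, I would observe that $M = N_K(N)$ is itself a Leibniz subalgebra of $K$ in which $A$ is an ideal. But $N = N_K(A)$ is, by definition, the \emph{largest} Leibniz subalgebra of $K$ containing $A$ as an ideal; hence the maximality of $N$ forces $M \subseteq N$. Combining this with the automatic inclusion $N \subseteq N_K(N) = M$ gives $N_K(N) = N$, which is precisely the asserted equality $N_K(N_K(A)) = N_K(A)$.

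I do not expect a genuine obstacle in this argument, since the heavy lifting is entirely delegated to Theorem \ref{th1}. The only points deserving a moment's care are the formal inclusion $N \subseteq N_K(N)$ (which rests on the fact that any subalgebra is an ideal of its normalizer) and the verification that $M = N_K(N)$ is again a subalgebra of $K$, so that the maximality clause defining $N_K(A)$ may legitimately be invoked to conclude $M \subseteq N$.
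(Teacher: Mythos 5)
Your proposal is correct and follows essentially the same route as the paper: form the subideal sequence $A \unlhd N_K(A) \unlhd N_K(N_K(A))$, invoke Theorem \ref{th1} (perfectness gives (i) $\Rightarrow$ (ii)) to get $A \unlhd N_K(N_K(A))$, and then conclude by the maximality of $N_K(A)$ among subalgebras of $K$ containing $A$ as an ideal. The only difference is presentational—you split the equality into two inclusions, while the paper states it directly—so there is nothing substantive to add.
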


\begin{proof} In view of Theorem \ref{th1}, the subideal sequence $A\unlhd N_K(A)\unlhd N_{K}(N_K(A))$ implies that $A\unlhd N_{K}(N_K(A))$. Since $N_K(A)$ is the largest subalgebra of $K$ containing $A$ as an ideal, it follows that $N_K(N_K(A))=N_K(A)$.\end{proof}

\section{An application to derivation algebras}\label{seccomplete}
 The main aim of this section is to prove Theorem \ref{complete}. Firstly, we state some auxiliary results.  For a Leibniz subalgebra $A$ of a Leibniz algebra $K$, the \emph{left - centralizer} of $A$ in $K$ is the Leibniz subalgebra of $K$, given by 
 
 \[ C^l_{K}(A)=\{x\in K:[x,y]=0 \ \ \makebox{for all} \ \  y\in A\}. \]
 
 \noindent We note that if $A,K$ are Lie algebras, then the left centralizer $C^l_{K}(A)$ is just the centralizer $C_{K}(A)=\{x\in K:[x,y]=[y,x]=0 \ \ \makebox{for all} \ \  y\in A\}$.  Theorem \ref{th1}, along with the following proposition will be our main tools.

\begin{proposition}\label{lemma10} Assume that a Leibniz algebra $A$ is a characteristic ideal of a Leibniz algebra $K$, and $C^l_{K}(A)=\{0\}$.  Then the Lie algebra $D(K)$ can be embedded as a subalgebra of $D(A)$.\\
 \end{proposition}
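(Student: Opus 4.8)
The plan is to realize the embedding explicitly as the restriction-to-$A$ map and to check that it is an injective homomorphism of Lie algebras. Since $A$ is a characteristic ideal of $K$, every derivation $f\in D(K)$ satisfies $f(A)\subseteq A$, so restricting $f$ to $A$ produces a well-defined linear endomorphism $f|_A$ of $A$. First I would verify that $f|_A$ is in fact a derivation of $A$: for $x,y\in A$ the identity $f([x,y])=[f(x),y]+[x,f(y)]$ holds in $K$, and as $f(x),f(y)\in A$ the right-hand side lies in $A$, so this is an identity internal to $A$. This defines a map $\rho:D(K)\to D(A)$, $\rho(f):=f|_A$, and its linearity is immediate.

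Next I would confirm that $\rho$ is a homomorphism of Lie algebras. Because $f(A)\subseteq A$ and $g(A)\subseteq A$ for all $f,g\in D(K)$, the composite $fg$ again maps $A$ into $A$, and restriction commutes with composition on $A$; hence $(fg-gf)|_A=f|_A\,g|_A-g|_A\,f|_A$, that is $\rho([f,g])=[\rho(f),\rho(g)]$. Thus $\rho$ is a Lie algebra homomorphism of $D(K)$ into $D(A)$, and it remains only to establish injectivity.

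The heart of the argument — and the single place where the hypothesis $C^l_{K}(A)=\{0\}$ is used — is the injectivity of $\rho$. Suppose $f\in D(K)$ lies in the kernel, so that $f(a)=0$ for every $a\in A$. Fix an arbitrary $k\in K$ and $a\in A$. Since $A$ is a (two-sided) ideal of $K$, the product $[k,a]$ belongs to $A$, and therefore $f([k,a])=0$. Expanding via the derivation property yields $0=f([k,a])=[f(k),a]+[k,f(a)]=[f(k),a]$, using $f(a)=0$. As this holds for all $a\in A$, we obtain $f(k)\in C^l_{K}(A)=\{0\}$, whence $f(k)=0$; since $k$ was arbitrary, $f=0$. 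Thus $\rho$ has trivial kernel and embeds $D(K)$ as a subalgebra of $D(A)$.

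I do not expect a genuine obstacle: once restriction is seen to be well defined (which is precisely the content of $A$ being a characteristic ideal) and a Lie homomorphism, injectivity follows directly from the triviality of the left-centralizer. The only point requiring care is that the left-centralizer condition must be invoked in its correct (left) form, matching the identity $[f(k),a]=0$ produced by the derivation identity, and that the two-sidedness of $A$ is what guarantees $[k,a]\in A$ so that $f([k,a])=0$ in the first place.
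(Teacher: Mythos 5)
Your proof is correct and essentially identical to the paper's: both embed $D(K)$ into $D(A)$ via restriction (well-defined since $A$ is characteristic), verify the restriction map is a Lie homomorphism, and prove injectivity by applying a kernel element $f$ to $[k,a]\in A$ to get $[f(k),a]=0$, hence $f(k)\in C^l_K(A)=\{0\}$. No gaps; the point you flag about needing the \emph{left} centralizer to match $[f(k),a]=0$ is exactly how the paper uses the hypothesis as well.
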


\begin{proof} Define a map $\phi:D(K)\rightarrow D(A)$ with $\phi(f):=\left.f\right|_{A}$, $f\in D(K)$.  Since $A$ is a characteristic ideal of $K$, the map $\phi$ is well - defined. Also $\phi$ is a homomorphism of Lie algebras.  Indeed, for all $f,g\in D(K)$ and $x\in A$, we have $f(x), g(x)\in A$, and hence $\phi([f,g])(x)=\left.f\right|_{A}(\left.g\right|_{A}(x))-\left.g\right|_{A}(\left.f\right|_{A}(x))=[\left.f\right|_{A},\left.g\right|_{A}](x)$, i.e., $\phi([f,g])=[\phi(f),\phi(g)]$.  Now let $f\in Ker(\phi)$. Since $A$ is a characteristic ideal of $K$, for all $x\in K$ and $y\in A$, we have

\begin{eqnarray*}0&=&\phi(f)([x,y])=\left.f\right|_{A}([x,y])=f([x,y])=[f(x),y]+[x,f(y)]\\
&=&[f(x),y]+[x,\left.f\right|_{A}(y)]=[f(x),y]+[x,\phi(f)(y)]=[f(x),y].\end{eqnarray*}

\noindent The above equation yields $f(x)\in C^l_{K}(A)$ for all $x\in K$. Since $C^l_{K}(A)=\{0\}$, we have $Ker(\phi)=\{0\}$, i.e., $\phi$ is injective. \end{proof}

Before we state the next result, let us recall some known facts for a Leibniz algebra $A$ (see for Section 3 in (\cite{BoMiSti})). Let $L(A)=\{L_x:x\in A\}$ be the ideal of left multiplications in $A$. Then

\begin{equation}\label{Lx}[f,L_x]=L_{f(x)}\ \ \makebox{for all} \ \ f\in D(A), \ L_x\in L(A).\end{equation}

\noindent The left center of $A$ is the characteristic ideal $Z^l(A)=\{x\in A: [x,y]=0 \ \ \makebox{for all} \ \ y\in A\}$. Moreover, $Leib(A)\subseteq Z^l(A)$ and $L(A)$ is isomorphic to $A/Z^l(A)$. Finally, if the center $Z(A/Leib(A))$ of $A/Leib(A)$ is trivial then the ideals $Z^l(A)$ and $Leib(A)$ coincide, and thus $L(A)$ is isomorphic to $A/Leib(A)$.

\begin{proposition}\label{ideal} The subspace $I:=\{f\in D(A): Im(f)\subseteq Leib(A)\}$ of $D(A)$ is an ideal of $D(A)$. Moreover, if the center of $A/Leib(A)$ is trivial then $I$ is a characteristic ideal.\end{proposition}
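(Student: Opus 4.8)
The plan is to handle the two assertions separately: first prove that $I$ is an ideal of $D(A)$ with no hypothesis on $A$, and then upgrade this to the characteristic property under the assumption that $Z(A/Leib(A))$ is trivial. The second part is where the real content lies, and my strategy is to re-describe $I$ internally to the Lie algebra $D(A)$ as a centralizer, so that invariance under $D(D(A))$ becomes accessible.

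For the first part, $I$ is visibly a subspace, so it suffices to check $[I,D(A)]\subseteq I$. Given $f\in I$ and $g\in D(A)$, I would evaluate $[f,g]=f\circ g-g\circ f$ on an arbitrary $x\in A$: the term $f(g(x))$ lies in $Im(f)\subseteq Leib(A)$, while $f(x)\in Leib(A)$ forces $g(f(x))\in Leib(A)$ because $Leib(A)$ is a characteristic ideal of $A$ and $g$ is a derivation. Hence $Im([f,g])\subseteq Leib(A)$, that is $[f,g]\in I$. This part is routine.

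The heart of the matter is the characteristic claim. Using the identity $[f,L_x]=L_{f(x)}$ from \eqref{Lx}, I would observe that $[f,L_x]=0$ for all $x$ exactly when $f(x)\in Z^l(A)$ for all $x$, i.e. when $Im(f)\subseteq Z^l(A)$. Under the hypothesis that $Z(A/Leib(A))$ is trivial we have $Z^l(A)=Leib(A)$, so this condition says precisely $f\in I$; thus $I=C_{D(A)}(L(A))$, the centralizer in $D(A)$ of the ideal $L(A)$ of left multiplications. The same circle of ideas yields $L(A)\cap I=Z(L(A))=\{0\}$, again because the triviality of $Z(A/Leib(A))$ forces $L(A)$ to be centerless. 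With these two facts, the characteristic property follows from a squeeze argument: let $\delta\in D(D(A))$ and $f\in I$, and fix $x\in A$. Since $\delta$ is a derivation of $D(A)$ and $[f,L_x]=0$, we get $[\delta(f),L_x]=-[f,\delta(L_x)]$, which lies in $I$ because $I$ is an ideal (first part) and $\delta(L_x)\in D(A)$. On the other hand $\delta(f)\in D(A)$, so $[\delta(f),L_x]=L_{\delta(f)(x)}\in L(A)$ by \eqref{Lx}. Therefore $[\delta(f),L_x]\in L(A)\cap I=\{0\}$, whence $\delta(f)\in C_{D(A)}(L(A))=I$ and $\delta(I)\subseteq I$.

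I expect the main obstacle to be finding the right internal description of $I$: its defining condition (image contained in $Leib(A)$) is not manifestly stable under derivations of $D(A)$, and the key insight is that the centerless hypothesis simultaneously identifies $I$ with the centralizer $C_{D(A)}(L(A))$ and forces $L(A)\cap I=\{0\}$. Once $I$ is pinned between $L(A)$ and its centralizer in this way, derivation-invariance falls out of \eqref{Lx} together with the first part. I note that this route avoids having to show that $L(A)$ itself is a characteristic ideal of $D(A)$, which is the tempting but considerably harder alternative.
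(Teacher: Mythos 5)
Your proof is correct and follows essentially the same route as the paper's: both rest on applying a second-order derivation to the identity $[f,L_x]=0$ (valid for $f\in I$ since the hypothesis gives $Z^l(A)=Leib(A)$), placing $[f,F(L_x)]$ in $I$ via the ideal property, identifying $[F(f),L_x]=L_{F(f)(x)}$ via \eqref{Lx}, and concluding from the centerlessness of $A/Leib(A)$. Your only departure is presentational: you pre-package the final step as the two facts $I=C_{D(A)}(L(A))$ and $L(A)\cap I=Z(L(A))=\{0\}$, whereas the paper unfolds $L_{F(f)(x)}\in I$ directly into the statement that $F(f)(x)+Leib(A)$ is central in $A/Leib(A)$, hence zero.
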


\begin{proof} The space $I$ is clearly a subalgebra of $D(A)$. To see that it is an ideal, let $f\in I$, $g\in D(A)$ and $x\in A$, so that $[f,g](x)=f(g(x))-g(f(x))$. By the definition of $I$, we have $f(g(x))\in Leib(A)$.  Since $Leib(A)$ is a characteristic ideal of $A$, it follows that $g(f(x))\in g(Leib(A))\subseteq Leib(A)$.  Therefore, $[f,g]\in I$, i.e., $I$ is an ideal of $D(A)$.  Now assume that the center of $A/Leib(A)$ is trivial.  To prove that $I$ is a characteristic ideal of $D(A)$, let $F\in D^2(A)=D(D(A))$ and $f\in I$. We will show that $F(f)\in I$.  To this end, let $x\in A$ and note by relation \eqref{Lx} that $[f,L_x]=L_{f(x)}=0$, given that $f(x)\in Leib(A)=Z^l(A)$.  Therefore, $F([f,L_x])=0$, i.e., $[F(f),L_x]+[f,F(L_x)]=0$. Since $I$ is an ideal of $D(A)$, the second term of the left - hand side of the last equation lies in $I$, while the first term is equal to $L_{F(f)(x)}$.  We deduce that $L_{F(f)(x)}\in I$ for all $x\in A$, i.e $[F(f)(x),y]\in Leib(A)$ for all $x,y\in A$.  But the last relation implies that $F(f)(x)+Leib(A)$ lies in the center of $A/Leib(A)$, which is trivial. We conclude that $F(f)(x)\in Leib(A)$ for all $x\in A$, that is $F(f)\in I$.   \end{proof}

 We will also use the following.

\begin{lemma}\label{lemma11} Let $A$ be a Leibniz algebra such that the center of $A/Leib(A)$ is trivial. Let $I$ be the ideal of $D(A)$ given in Proposition \ref{ideal}.  Then the center of $D(A)/I$ is trivial. \end{lemma}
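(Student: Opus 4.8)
The plan is to take an arbitrary derivation $f\in D(A)$ whose class $f+I$ lies in the center of $D(A)/I$, and to show directly that $f\in I$, i.e. that $Im(f)\subseteq Leib(A)$. The centrality of $f+I$ means precisely that $[f,g]\in I$ for every $g\in D(A)$. The key idea is to test this condition not against all of $D(A)$, but only against the left multiplications $L_x\in L(A)\subseteq D(A)$, $x\in A$, which are themselves derivations by the defining property of a left Leibniz algebra.

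First I would specialize $g=L_x$ and invoke the commutator formula \eqref{Lx}, which gives $[f,L_x]=L_{f(x)}$. Since $f+I$ is central, we get $L_{f(x)}=[f,L_x]\in I$ for every $x\in A$. Unwinding the definition of $I$, the condition $L_{f(x)}\in I$ says that $Im(L_{f(x)})\subseteq Leib(A)$, that is, $[f(x),y]\in Leib(A)$ for all $y\in A$.

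Next I would pass to the quotient $A/Leib(A)$, which is a Lie algebra. The relation $[f(x),y]\in Leib(A)$ for all $y$ says exactly that $f(x)+Leib(A)$ commutes with every element of $A/Leib(A)$; because $A/Leib(A)$ is a Lie algebra, its left center and its center coincide, so $f(x)+Leib(A)$ lies in the center of $A/Leib(A)$. By hypothesis this center is trivial, whence $f(x)\in Leib(A)$. As $x\in A$ was arbitrary, $Im(f)\subseteq Leib(A)$, i.e. $f\in I$; thus $f+I=0$ in $D(A)/I$, and the center of $D(A)/I$ is trivial.

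This argument is essentially a repackaging of the computation already carried out in the proof of Proposition \ref{ideal}, with $L_x$ playing the role there played by the relation $[f,L_x]=L_{f(x)}=0$. I do not expect a serious obstacle: the only point requiring a little care is the passage from the one-sided condition $[f(x),y]\in Leib(A)$ to genuine centrality in $A/Leib(A)$, which rests on the fact that $A/Leib(A)$ is a Lie algebra, so that antisymmetry renders the left bracket condition equivalent to membership in the center. Everything else is a direct unwinding of definitions together with the identity \eqref{Lx}.
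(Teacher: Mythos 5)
Your proposal is correct and follows essentially the same route as the paper's proof: test centrality of $f+I$ against the left multiplications $L_x+I$, apply the identity \eqref{Lx} to get $L_{f(x)}\in I$, unwind this to $[f(x),y]\in Leib(A)$ for all $y$, and conclude $f(x)+Leib(A)\in Z(A/Leib(A))=\{0\}$, hence $f\in I$. The one point you flag as delicate --- passing from the left-bracket condition to genuine centrality via antisymmetry in the Lie algebra $A/Leib(A)$ --- is handled implicitly in the paper and your explicit justification of it is accurate.
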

 
 \begin{proof} Let $f+I\in Z(D(A)/I)$. Using relation \eqref{Lx}, for all $x\in A$ we have $0=[f+I,L_x+I]=L_{f(x)}+I$. The last equation yields $L_{f(x)}\in I$, i.e., $[f(x),y]\in Leib(A)$ for all $y\in A$.  Therefore, $f(x)+Leib(A)\in Z(A/Leib(A))=\{0\}$, and thus $Im(f)\subseteq Leib(A)$, i.e., $f+I=0$. \end{proof}

\begin{lemma}\label{lemma12} Let $A$ be a Leibniz algebra with $Z(A/Leib(A))=\{0\}$. Then the Lie algebra $L(A)$ can be embedded as an ideal of $D(A)/I$.  Moreover, the centralizer $C_{D(A)/I}(L(A))$ is trivial.\end{lemma}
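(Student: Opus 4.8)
The plan is to establish two claims: first that $L(A)$ embeds as an ideal of $D(A)/I$, and second that the centralizer of its image is trivial. For the embedding, I would define a natural map $\psi:L(A)\rightarrow D(A)/I$ by sending $L_x\mapsto L_x+I$. This is well-defined and linear, and it is a Lie algebra homomorphism since the bracket on $L(A)$ is inherited from $D(A)$. To check injectivity, I would suppose $L_x\in I$, meaning $\op{Im}(L_x)\subseteq Leib(A)$; this says $[x,y]\in Leib(A)$ for all $y\in A$, so $x+Leib(A)$ lies in the center $Z(A/Leib(A))=\{0\}$, forcing $x\in Leib(A)=Z^l(A)$, i.e.\ $L_x=0$. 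Thus $\psi$ is injective and we may identify $L(A)$ with its image $\overline{L(A)}=(L(A)+I)/I$ inside $D(A)/I$.

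Next I would verify that this image is an ideal of $D(A)/I$. The key tool is relation \eqref{Lx}, namely $[f,L_x]=L_{f(x)}$ for all $f\in D(A)$ and $L_x\in L(A)$. Indeed, for $f+I\in D(A)/I$ and $L_x+I\in \overline{L(A)}$, we compute $[f+I,L_x+I]=[f,L_x]+I=L_{f(x)}+I\in\overline{L(A)}$, since $L_{f(x)}\in L(A)$. This shows $\overline{L(A)}$ is an ideal of $D(A)/I$, completing the first assertion.

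For the triviality of the centralizer, suppose $f+I\in C_{D(A)/I}(\overline{L(A)})$, so that $[f+I,L_x+I]=0$ in $D(A)/I$ for all $x\in A$. By relation \eqref{Lx} this means $L_{f(x)}+I=0$, i.e.\ $L_{f(x)}\in I$, which unwinds to $[f(x),y]\in Leib(A)$ for all $x,y\in A$. As in Lemma \ref{lemma11}, this forces $f(x)+Leib(A)\in Z(A/Leib(A))=\{0\}$, hence $f(x)\in Leib(A)$ for every $x\in A$, i.e.\ $\op{Im}(f)\subseteq Leib(A)$, which says precisely $f\in I$ and therefore $f+I=0$. Thus the centralizer is trivial. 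I expect the main point requiring care is the repeated passage between the condition $L_{f(x)}\in I$ and membership in the center of $A/Leib(A)$, which rests on the stated fact that $Z^l(A)=Leib(A)$ when $Z(A/Leib(A))=\{0\}$; the rest of the argument is essentially a clean bookkeeping exercise built on \eqref{Lx} and the hypothesis on the center.
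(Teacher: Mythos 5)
Your proposal is correct and follows essentially the same route as the paper: the same embedding $L_x\mapsto L_x+I$, injectivity via the hypothesis $Z(A/Leib(A))=\{0\}$, the ideal property from relation \eqref{Lx}, and the centralizer argument by unwinding $L_{f(x)}\in I$ to $Im(f)\subseteq Leib(A)$. The only (cosmetic) difference is that you pass through the center of $A/Leib(A)$ directly in the centralizer step, where the paper instead cites $Ker(\phi)=\{0\}$; the two are equivalent.
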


\begin{proof} We consider the map $\phi:L(A)\rightarrow D(A)/I$ with $L_x\mapsto L_x+I$, $x\in A$.  Observe that $[L_x+I,L_y+I]=L_{[x,y]}+I$, and hence $\phi$ is a homomorphism. If $L_x\in Ker(\phi)$, then $[x,y]\in Leib(A)$ for all $y\in A$.  The last relation implies that $L_x\in Z(L(A))=Z(A/Leib(A))=\{0\}$.  Therefore, $\phi$ is injective and hence $L(A)$ is a subalgebra of $D(A)/I$. Moreover, for any $f\in D(A)$ and $L_x\in L(A)$, relation \eqref{Lx} implies that $[f+I,L_x+I]=L_{f(x)}+I\in \phi(L(A))$, hence $L(A)$ is isomorphic to an ideal of $D(A)/I$.  Henceforth, identify $L(A)$ with its isomorphic image $\phi(L(A))$. To prove the equality $C_{D(A)/I}(L(A))=\{0\}$, let $f+I\in C_{D(A)/I}(L(A))$.  Then for all $x\in A$, we have $0=[f+I,L_x+I]=L_{f(x)}+I$.  Therefore, $L_{f(x)}\in Ker(\phi)=\{0\}$.  We conclude that $Im(f)\subseteq Leib(A)$, i.e., $f+I=0$. \end{proof}

In the sequel, we set $\fr{g}:=D(A)/I$. Since $\fr{g}$ has trivial center (Lemma \ref{lemma11}), all Lie algebras $D^n(\fr{g})$ have trivial center and the corresponding adjoint representations $\op{ad}:D^{n-1}(\fr{g})\rightarrow D^{n}(\fr{g})$ are faithfull (see for example \cite{Sh}). Then  we can form the subideal sequence

\begin{equation}\label{subideas} L(A)\unlhd \fr{g}\unlhd D^1(\fr{g})\unlhd D^2(\fr{g})\unlhd \cdots \unlhd D^n(\fr{g})\unlhd \cdots,\end{equation}

\noindent where $L(A)$ is embedded as an ideal of $\fr{g}=D(A)/I$ via the map $L_x\mapsto L_x+I$, and each $D^{n-1}(\fr{g})$ is embedded as an ideal of $D^n(\fr{g})$ via the adjoint representation, $n\geq 1$.   We proceed to prove Theorem \ref{complete}.\\

\noindent \emph{Proof of Theorem \ref{complete}.} Recall that $A/Leib(A)=L(A)$.  Firstly, we treat the case $n=0$, i.e., show that $D(A)/I$ can be embedded as a subalgebra of $D(L(A))$.  To this end, define a map $\phi:D(A)\rightarrow D(L(A))$ with $\phi(f)(L_x):=L_{f(x)}$, $L_x\in L(A)$.  The map $\phi$ is well - defined, since for all $L_x,L_y\in L(A)$ and $f\in D(A)$, we have $\phi(f)([L_x,L_y])=\phi(f)(L_{[x,y]})=L_{[f(x),y]}+L_{[x,f(y)]}=[\phi(f)(L_x),L_y]+[L_x,\phi(f)(L_y)]$, i.e., $\phi(f)$ is a derivation of $L(A)$.  Moreover, $\phi$ is a homomorphism of Lie algebras since for all $f,g\in D(A)$ and $L_x\in L(A)$, we have

\begin{eqnarray*} [\phi(f),\phi(g)](L_x)&=&\phi(f)(L_{g(x)})-\phi(g)(L_{f(x)})=L_{f(g(x))}-L_{g(f(x))}\\
&=&L_{[f,g](x)}=\phi([f,g])(L_x).\end{eqnarray*}

\noindent Finally, if $f\in Ker(\phi)$ then $L_{f(x)}=0$, and hence $f(x)\in Leib(A)$.  Therefore, $f\in I$ and thus $Ker(\phi)=I$.  This concludes the proof for the case $n=0$.

 For $n>0$, since $A$ is perfect, it is clear that the Lie algebra $L(A)=A/Leib(A)$ is also perfect as a quotient of $A$.  In view of the subinvariant series \eqref{subideas}, Theorem \ref{th1} (and Corollary \ref{corol1}) imply that the Lie algebra $L(A)$ is an ideal of $D^{n-1}(\fr{g})$ for all $n\geq 1$.  In fact, Theorem \ref{th1} also implies that $L(A)$ is a characteristic ideal of $D^{n-1}(\fr{g})$ for all $n\geq 1$.  Therefore, in view of Proposition \ref{lemma10}, to show that $D^n(\fr{g})$ (i.e., $D(D^{n-1}(\fr{g}))$) is a subalgebra of $D(L(A))$ for all $n\geq 1$, it suffices to show that 

\begin{equation}\label{centralizer}C_{D^{n-1}(\fr{g})}(L(A))=\{0\},  \ \  \makebox{for all} \ \ n\geq 1.\end{equation}

\noindent For $n=1$, the fact that $C_{\fr{g}}(L(A))=0$ is shown in Lemma \ref{lemma12}. Moreover, for all $n\geq 1$, we have $C_{D^{n}(\fr{g})}(D^{n-1}(\fr{g}))=0$ (see for example Lemma 10 in \cite{Sh}).  Finally, if $A_1\unlhd A_2\unlhd \cdots \unlhd A_n$ is a subinvariant series of Lie algebras such that the centralizer of $A_{i}$ in $A_{i+1}$ is trivial for all $i=1,\dots,n-1$, then the centralizer of $A_1$ in $A_n$ is trivial (see for example Lemma 12 in \cite{Sh}). Applying the above to the subinvariant series \eqref{subideas}, we conclude that relation \eqref{centralizer} is true, thus completing the proof of Theorem \ref{complete}.\qed  \\

 A corollary of Theorem \ref{complete} is the following theorem of Su and Zhu.
 
 \begin{theorem}\label{SuZhu} \emph{(\cite{SuZhu})} Let $A$ be a perfect Lie algebra with trivial center.  Then the Lie algebra of derivations of $A$ is complete.   \end{theorem}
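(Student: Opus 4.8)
The plan is to deduce this from Theorem \ref{complete} applied with $n=1$. First I would record the simplifications that occur because $A$ is a Lie algebra: here $Leib(A)=\{0\}$, so $A/Leib(A)=A$ has trivial center, $A$ is perfect by hypothesis, and the ideal $I=\{f\in D(A):Im(f)\subseteq Leib(A)\}$ reduces to $\{0\}$. Consequently $\fr{g}:=D(A)/I=D(A)$, and the ideal $L(A)=A/Leib(A)$ is identified with the inner derivations $\{\ad_x:x\in A\}\subseteq D(A)$, which is isomorphic to $A$ via $\ad$ since $Z(A)=\{0\}$. By definition, to show that $D(A)$ is complete I must verify two things: that $D(A)$ has trivial center, and that every derivation of $D(A)$ is inner. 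The first is exactly Lemma \ref{lemma11} (with $I=\{0\}$), so the whole problem reduces to proving that $D(D(A))$ consists only of inner derivations.

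For the second point, I would invoke Theorem \ref{complete} with $n=1$, which furnishes an injective homomorphism of Lie algebras $\Phi:D(D(A))\to D(L(A))\cong D(A)$. The structural fact I need, read off from the proof of Theorem \ref{complete} (via Proposition \ref{lemma10}), is that $\Phi$ is induced by restriction to the characteristic ideal $L(A)$: for $F\in D(D(A))$ one has $\Phi(F)=\left.F\right|_{L(A)}$, transported to $D(A)$ through the identification $L(A)\cong A$. The injectivity of $\Phi$ rests on $C_{D(A)}(L(A))=\{0\}$, which is the content of Lemma \ref{lemma12}.

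The heart of the argument is the computation $\Phi\circ\ad=\Id_{D(A)}$, where $\ad:D(A)\to D(D(A))$ is the adjoint map, injective because $Z(D(A))=\{0\}$. Fix $b\in D(A)$. For $L_x\in L(A)$, relation \eqref{Lx} gives $\ad_b(L_x)=[b,L_x]=L_{b(x)}$, so the restriction $\left.\ad_b\right|_{L(A)}$ sends $L_x\mapsto L_{b(x)}$; under $L(A)\cong A$ (where $L_x\leftrightarrow x$) this is precisely the derivation $x\mapsto b(x)$, namely $b$ itself. Hence $\Phi(\ad_b)=b$ for every $b\in D(A)$, that is, $\Phi\circ\ad=\Id_{D(A)}$.

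Finally I would conclude as follows: given any $F\in D(D(A))$, set $b:=\Phi(F)\in D(A)$; then $\Phi(\ad_b)=(\Phi\circ\ad)(b)=b=\Phi(F)$, and the injectivity of $\Phi$ forces $F=\ad_b$. Thus every derivation of $D(A)$ is inner, and together with the trivial center this says exactly that $D(A)$ is complete. I expect the main obstacle to be purely a matter of careful bookkeeping in the second and third paragraphs: correctly unwinding the map $\Phi$ produced by Theorem \ref{complete} as restriction to $L(A)$, and tracking the identification $L(A)\cong A$ so that the composite $\Phi\circ\ad$ is \emph{literally} the identity rather than merely an isomorphism. Once that identification is pinned down, relation \eqref{Lx} makes the computation immediate, and the conclusion follows formally from the injectivity of $\Phi$.
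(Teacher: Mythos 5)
Your proposal is correct and follows essentially the same route as the paper: specialize to $Leib(A)=\{0\}$, $I=\{0\}$, $L(A)=A$, apply Theorem \ref{complete} to embed $D(D(A))$ into $D(A)$, and invoke Lemma \ref{lemma11} for the triviality of the center of $D(A)$. You are in fact more explicit than the paper, which concludes $D(D(A))=D(A)$ directly from the two mutual embeddings, whereas you verify the retraction identity $\Phi\circ\ad=\Id_{D(A)}$ via relation \eqref{Lx}; this is exactly the detail that justifies the equality (and keeps the argument valid without any dimension count in the infinite-dimensional case).
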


 Indeed, if $A$ is a perfect Lie algebra with trivial center then $Z^l(A)=Leib(A)=\{0\}$, and thus $A=L(A)=A/Leib(A)$ and $I=\{0\}$.  Theorem \ref{complete} yields $D(D(A))\subseteq D(A)$. On the other hand, since $A$ has trivial center, Lemma \ref{lemma11} implies that $D(A)$ also has trivial center, and thus $D(A)\unlhd D(D(A))$.  Therefore, $D(D(A))=D(A)$, implying that $D(A)$ is complete. \\

Now set $I^2:=\{F\in D^2(A):Im(F)\subseteq I\}$, where $I$ is the characteristic ideal of $D(A)$ given in Proposition \ref{ideal}. Let us state one more corollary of Theorem \ref{complete}.

\begin{corollary}\label{d^2}If $A$ is a perfect Leibniz algebra and the center of $A/Leib(A)$ is trivial, then the Lie algebra $D^2(A)/I^2$ can be embedded as a subalgebra of $D(A/Leib(A))$. \end{corollary}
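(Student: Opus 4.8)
The plan is to factor the desired embedding through $D(D(A)/I)$, which Theorem \ref{complete} already places inside $D(A/Leib(A))$. The crucial structural fact is that, since the center of $A/Leib(A)$ is trivial, Proposition \ref{ideal} guarantees that $I$ is a \emph{characteristic} ideal of $D(A)$. Consequently, every $F\in D^2(A)=D(D(A))$ satisfies $F(I)\subseteq I$ and therefore descends to a well-defined linear endomorphism $\overline{F}$ of the quotient $\fr{g}:=D(A)/I$, given by $\overline{F}(f+I):=F(f)+I$ for $f\in D(A)$.

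First I would verify that $\overline{F}$ is in fact a derivation of $\fr{g}$. Using that $F$ is a derivation of $D(A)$ and that the bracket on $\fr{g}$ is induced from that of $D(A)$, a direct computation gives $\overline{F}([f+I,g+I])=F([f,g])+I=[F(f),g]+[f,F(g)]+I=[\overline{F}(f+I),g+I]+[f+I,\overline{F}(g+I)]$. Thus the assignment $\Psi:D^2(A)\rightarrow D(\fr{g})$, $F\mapsto\overline{F}$, is defined. I would then check that $\Psi$ is a homomorphism of Lie algebras, which follows from the analogous short computation showing $\overline{[F,G]}(f+I)=[\overline{F},\overline{G}](f+I)$ for all $f+I\in\fr{g}$ and all $F,G\in D^2(A)$.

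Next I would identify the kernel of $\Psi$. By definition, $F\in Ker(\Psi)$ precisely when $F(f)+I=I$ for every $f\in D(A)$, i.e., when $Im(F)\subseteq I$; this is exactly the condition $F\in I^2$. Hence $Ker(\Psi)=I^2$, and the first isomorphism theorem yields an embedding of $D^2(A)/I^2$ as a subalgebra of $D(\fr{g})=D(D(A)/I)=D^1(D(A)/I)$.

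Finally, I would invoke Theorem \ref{complete} with $n=1$, which embeds $D^1(D(A)/I)$ as a subalgebra of $D(A/Leib(A))$; composing this with the embedding $D^2(A)/I^2\hookrightarrow D^1(D(A)/I)$ from the previous step completes the proof. I expect the only delicate point to be the well-definedness of $\Psi$, which rests entirely on $I$ being a characteristic ideal of $D(A)$ --- precisely the place where the hypothesis on the triviality of the center of $A/Leib(A)$ is used, via Proposition \ref{ideal}. Everything else is routine verification of the derivation and homomorphism properties.
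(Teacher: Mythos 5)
Your proposal is correct and follows essentially the same route as the paper's own proof: you construct the same map $D^2(A)\rightarrow D(D(A)/I)$, $F\mapsto (f+I\mapsto F(f)+I)$, use Proposition \ref{ideal} (the characteristic property of $I$) for well-definedness, identify the kernel as $I^2$, and conclude by composing with the embedding from Theorem \ref{complete}. The only difference is that you spell out the routine verifications (derivation property, homomorphism property) that the paper leaves implicit.
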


\begin{proof} We define a map $\phi:D^2(A)\rightarrow D(D(A)/I)$ with $\phi(F)(f+I):=F(f)+I$, $F\in D^2(A)$, $f\in D(A)$.  Since $I$ is a characteristic ideal of $D(A)$ (Proposition \ref{ideal}), we have $\phi(F)(f+I)=\phi(F)(g+I)$ if $f-g\in I$, $F\in D^2(A)$. Besides, it is not hard to see that $\phi(F)$ defines a derivation of $D(A)/I$, and hence $\phi$ is well - defined. Moreover, it can be easily verified that $\phi$ is a homomorphism and $Ker(\phi)=I^2$.  Therefore, $D^2(A)/I^2$ can be embedded as a subalgebra of $D(D(A)/I)$.  By Theorem \ref{complete}, it is also a subalgebra of $D(A/Leib(A))$. \end{proof}

\begin{remark} One can observe that Theorem \ref{SuZhu} follows also as a corollary of the above result (if $Leib(A)=\{0\}$ then $I^2=0$ and thus $D^2(A)\subseteq D(A)$, i.e., $D(A)$ is complete).  Theorem \ref{SuZhu} essentially states that in the case of perfect Lie algebras $A$ with trivial center, the derivation tower $A\unlhd D(A)\unlhd D^2(A)\unlhd \cdots $ terminates at the second step (i.e., $D^2(A)=D(A)$ and thus $D(A)$ is complete), irrespectively of the dimension of the algebra and the characteristic of the ground field. Although Theorem \ref{complete} and Corollary \ref{d^2} do not immediately generalize this property to perfect Leibniz algebras, they do imply that the derivation algebras $D^n(D(A)/I)$ and $D^2(A)/I^2$ are subalgebras of $D(A/Leib(A))$ (which is a complete Lie algebra by Theorem \ref{SuZhu}). Within this context, it would be interesting to investigate whether there exists an (in a sense uniform) $n_0\in \mathbb N$ such that $D^{n_0}(D(A)/I)$ is a complete Lie algebra for all perfect Leibniz algebras $A$ with $Z(A/Leib(A))=\{0\}$, irrespectively of the dimension of $A$ and the characteristic of the ground field.   For perfect Lie algebras, Theorem \ref{SuZhu} ensures that $n_0=0$.  \end{remark}


\begin{thebibliography}{}


\bibitem{Aya}
 S. Ayupov and B. Omirov, 
{\it On Leibniz algebras}, Algebra and Operators Theory,
Proceedings of the Colloquium in Tashkent, Kluwer (1998), 1--13.



\bibitem{Barn}
D. W. Barnes,
{\it On Levi's theorem for Leibniz algebras},
Bull. Aust. Math. Soc., \textbf{86} (2012), 184--185.





\bibitem{Bloh}
A. Bloh,
{\it A generalization of the concept of a Lie algebra}, Dokl. Akad. Nauk., \textbf{165} (1965), 471--473.



\bibitem{BoHo}
R. Bonezzi and O. Hohm,
{\it Leibniz gauge theories and infinity structures},
Comm. Math. Phys., \textbf{377} (2020), 2027--2077.


\bibitem{BoMiSti}
 K. Boyle, K. C. Misra and E. Stitzinger,
 {\it Complete Leibniz algebras}, J. Algebra, \textbf{557} (2020), 172--180.

\bibitem{BuWa}
D. Burde and F. Wagemann,
{\it Sympathetic Lie algebras and adjoint cohomology for Lie algebras},
J. Algebra, \textbf{629} (2023), 381--398.


\bibitem{Cas}
J. M. Casas and E. Khmaladze,
{\it On Lie - central extensions of Leibniz algebras},  RACSAM, \textbf{111} (2017), 39--56.


\bibitem{Fel}
J. Feldvoss,
{\it Semi-simple Leibniz algebras I}, arXiv:2401.05588 (2024).


\bibitem{GoVi}
S. G\'{o}mez - Vidal, A. K. Khudoyberdiyev and B. A. Omirov,
{\it Some remarks on semisimple Leibniz algebras},
J. Algebra, \textbf{410} (2014), 526--540.

\bibitem{KiWe}
M. K. Kinyon and A. Weinstein,
{\it Leibniz algebras, Courant algebroids, and multiplications
on reductive homogeneous spaces}, Amer. J. Math., \textbf{123} (2001), 525--550.

\bibitem{Lod}
J. L. Loday, 
{\it Une version non commutative des alg\`ebres de Lie: Les alg\`ebres de Leibniz},
 Enseign. Math., \textbf{39} (1993), 269-293.


\bibitem{LodPir}
J. L. Loday and T. Pirashvili,
{\it Universal enveloping algebras of Leibniz algebras and (co)homology},
Math. Ann., \textbf{296} (1993), 139--158.


\bibitem{MiSiYu}
K. C. Misra, E. Stitzinger and X. Yu,
{\it Subinvariance in Leibniz algebras},
J. Algebra, \textbf{567} (2021), 128--138.

\bibitem{Most}
J. Mostovoy,
{\it Differential graded Lie algebras and Leibniz algebra cohomology},
Int. Math. Res. Not. IMRN,  \textbf{2022} (2022), 196--209.

 
\bibitem{Pi}
T. Pirashvili,
{\it On strongly perfect Lie algebras},
Comm. Algebra, \textbf{41} (2013), 1619--1625. 
 
 
\bibitem{Sal}
G. Salgado Gonz\'{a}lez,
{\it Invariants of contact Lie algebras},
J. Geom. Phys., \textbf{144} (2019), 388--396. 
 
\bibitem{Sh}
E. Schenkman,
{\it A Theory of Subinvariant Lie Algebras},
Amer. J. Math., \textbf{73} (1951), 453--474. 
 
 
 \bibitem{So}
 N. P. Souris,
 {\it On the transitivity of Lie ideals and a characterization of perfect Lie algebras}, Arch. Math., \textbf{124} (2025), 9--18.



\bibitem{Stu}
R. Campoamor - Stursberg,
{\it The structure of the invariants of perfect Lie algebras},
J. Phys. A: Math. Gen., \textbf{36} (2003), 6709--6723.

\bibitem{SuZhu}
Y. Su and L. Zhu,
{\it Derivation Algebras of Centerless Perfect Lie Algebras are Complete},
J. Algebra, \textbf{285} (2005), 508--515.

\end{thebibliography}
\end{document}